\DeclareMathOperator{\spn}{span}
 \newtheorem{thm}{Theorem}[section]
 \newtheorem{lem}[thm]{Lemma}
 \newtheorem{prop}[thm]{Proposition}
 \numberwithin{equation}{section}
\begin{document}

\begin{frontmatter}

\title{Sharp estimates for the covering numbers of the Weierstrass fractal kernel}

\author[label1]{Douglas Azevedo
}
\ead{douglasa@utfpr.edu.br}

\address[label1]{DAMAT-UTFPR. Av. Alberto Carazzai, 1640, 863000-000, Cornelio Procopio - Brazil}

\author[label2]{Karina Gonzalez\corref{cor1}
}
\cortext[cor1]{I am corresponding author}
\ead{karina.navarro@correounivalle.edu.co}

\address[label2]{ICMC - Universidade de S\~{a}o Paulo, Brazil}

\author[label3]{Thaís Jordão
}
\ead{tjordao@icmc.usp.br}
\address[label3]{ICMC - Universidade de S\~{a}o Paulo.\\
Av. Trabalhador saocarlense, 400, São Carlos, S\~{a}o Paulo, Brazil, 13566-590}

\begin{abstract}
In this paper, we present sharp estimates for
the covering numbers of the embedding of the reproducing kernel Hilbert space (RKHS) associated with the Weierstrass fractal kernel into the space of continuous functions. The method we apply is based on the characterization of the infinite-dimensional RKHS generated by the Weierstrass fractal kernel and it requires estimates for the norm operator of orthogonal projections on the RKHS.
\end{abstract}

\begin{keyword}
Covering numbers, Weierstrass fractal kernel, Fourier series, reproducing kernel Hilbert space.
\MSC[2010] Primary 47B06 - 42A16.  Secondary 46E22 - 26A15 - 42A32.
\end{keyword}

\end{frontmatter}

\section{Introduction}
\label{intro}

In 1872, K. Weierstrass presented a special class of continuous and nowhere differentiable functions (CNDFs) given by Fourier/trigonometric series expansions.\ The Weierstrass function is defined as follows
\begin{equation}\label{1.1} 
w_{a,b} (x)= \sum_{n=0}^{\infty} a^n \cos(b^n \pi  \,x), \quad x\in\mathbb{R},
\end{equation}
where $0<a<1$ and $b\in \mathbb{R}$. It is clear that the Weierstrass function defines a continuous bounded function.\ Weierstrass proved that $w_{a,b}$ is nowhere differentiable provided that $ab\geq 1+3\pi/2$,  with $b$ an odd integer.\ G.H. Hardy relaxed this condition in \cite{hardy} and proved that for $0<a<1$ and $ab\geq 1$, the Weierstrass function remains nowhere differentiable.\ This function plays an important role in fractional Brownian motion  and it inspires the Weierstrass-Mandelbrot curve, a fractal curve widely explored in fractal geometry (\cite{BL,PT}). In this paper,  the Weierstrass function is considered to design the Weierstrass fractal kernel.\ 

Consider $I:=[-1,1]$ and $C(I)$ the space of continuous real-valued functions on $I$ endowed with the supremum norm $\|\, \cdot \,\|_{\infty}$.\  Throughout the paper, for $b\in\mathbb{N}$ we will consider the orthonormal system $$
\{\cos(b^{n}\pi \, \cdot \, ),\sin(b^{n}\pi \, \cdot \, ): n\in \mathbb{N}\}
$$ in $L^2(I)$, where $L^2 (I):= L^2 (I,dx)$ is the usual Hilbert space of square integrable functions with the usual inner product $$
\langle f,g  \rangle_2 = \int_{-1}^{1} f(x)g(x)\, dx, \quad f,g \in L^2 (I). 
$$

For $0<a<1$ and $b$  a natural number such that $ab\geq 1$, the \emph{Weierstrass fractal kernel}  $W: I\times I\longrightarrow \mathbb{R}$ is given  by
\begin{equation}\label{WK}
W(x,y):=w_{a,b}(x-y), \quad x,y\in I,
\end{equation}
where $w_{a,b}$ is the Weierstrass function in equation \eqref{1.1}.\ This kernel is continuous, nowhere differentiable, and positive definite.\ The theory of RKHSs (see \cite{aron}) ensures that there exists a unique Hilbert space (RKHS) $\mathcal{H}_W :=(\mathcal{H}_W (I), \langle \cdot, \cdot\rangle_{W})$ such that the Weierstrass fractal kernel is the associated reproducing kernel. 

We present upper and lower (sharp) estimates for the covering numbers of the embedding $I_W : \mathcal{H}_W \rightarrow C(I)$. The covering numbers of an operator   are defined in terms of the covering numbers of subsets of metric spaces. If $A$ is a subset of a metric space $X:=(X,d)$ and $\epsilon>0$, then the covering numbers $\mathcal{C} (\epsilon, A)$ are the minimal number of balls in $X$ of radius $\epsilon$ covering $A$.\ It easy to see that if $A$ is a compact subset of a metric space, then $\mathcal{C}(\epsilon, A) <\infty$.\ If we consider  $(X, \|\cdot\|_X), (Y,\|\cdot\|_Y)$ Banach spaces and $B_X$ the unit ball in $X$, then the \emph{covering numbers} of an operator $T: X \rightarrow Y$ are given by
\begin{equation}\label{covnumop}
\mathcal{C} (\epsilon, T):= \mathcal{C}(\epsilon,T(B_X)), \quad \epsilon>0.
\end{equation}

The generality of the concept of metric entropy (\cite{KT,Lorentz1}) includes covering numbers, packing numbers, entropy numbers and it has several applications in many branches of Mathematics (see  \cite{Edmunds,haussler,Lorentz2,Konig,Li,SC}).\ In information theory and statistics, including machine learning, it plays a central role.\ We suggest \cite{zhou1} and references quoted there for more information on metric entropy and machine learning methods.

Our main result asserts that \emph{Kolmogorov’s $\epsilon$-entropy} (\cite{KT}) $\ln (\mathcal{C} (\epsilon, I_W))$ is asymptotically  
equivalent to $\left[\ln(1/\epsilon)\right]^2$. In the paper, for functions $f,g: (0,\infty) \rightarrow \mathbb{R}$, the notation $f(\epsilon) \asymp g(\epsilon)$ stands for
\[
0< \lim_{\epsilon\rightarrow 0} \inf \frac{f(\epsilon)}{g(\epsilon)} \leq \lim_{\epsilon\rightarrow 0} \sup \frac{f(\epsilon)}{g(\epsilon)} < \infty,\]
called weak equivalence (\cite{KT}).\ We will also use the strong equivalence notation $f(\epsilon) \approx g(\epsilon)$ and it means that
\[
\lim_{\epsilon\rightarrow 0} \frac{f(\epsilon)}{g(\epsilon)}=1.
\]

The main result of this paper reads as follows.

\begin{thm}\label{coveringesatimates} Let $W$ be as in \eqref{WK}.\ The covering numbers of the embedding $I_{W}:\mathcal{H}_W \rightarrow C(I)$ satisfy
\[ \frac{2}{{\ln\left(1/a\right)}}  \leq \liminf_{\epsilon\to 0}\frac{\ln(\mathcal{C}(\epsilon, I_W ))}{[\ln \left(1/\epsilon \right)]^2} \leq \limsup_{\epsilon\to 0}\frac{\ln(\mathcal{C}(\epsilon, I_W ))}{[\ln \left(1/\epsilon \right)]^2} \leq \frac{4}{{\ln\left(1/a\right)}}.\]
\end{thm}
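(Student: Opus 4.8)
The plan is to exploit the Mercer-type structure of $W$. Expanding $\cos(b^n\pi(x-y))$ by the cosine addition formula gives
\[
W(x,y)=\sum_{n=0}^{\infty} a^n\bigl[\cos(b^n\pi x)\cos(b^n\pi y)+\sin(b^n\pi x)\sin(b^n\pi y)\bigr],
\]
so the trigonometric system in the statement diagonalizes $W$ with eigenvalues $a^n$ of multiplicity two. First I would record the resulting description of the space, namely that $f\in\mathcal{H}_W$ if and only if $f=\sum_{n\ge0}[\alpha_n\cos(b^n\pi\,\cdot\,)+\beta_n\sin(b^n\pi\,\cdot\,)]$ with $\|f\|_W^2=\sum_{n\ge0}(\alpha_n^2+\beta_n^2)\,a^{-n}<\infty$; this is the characterization promised in the abstract. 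Writing $P_N$ for the orthogonal projection of $\mathcal{H}_W$ onto $\spn\{\cos(b^n\pi\,\cdot\,),\sin(b^n\pi\,\cdot\,):0\le n<N\}$, a triangle-inequality-plus-Cauchy--Schwarz computation yields the tail estimate $\|(I-P_N)f\|_\infty\le\sqrt{2/(1-a)}\,a^{N/2}\|f\|_W$ together with the uniform bound $\|P_Nf\|_\infty\le\sqrt{2/(1-a)}\,\|f\|_W$. These are the operator-norm estimates for the orthogonal projections on which everything rests.

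For the upper bound I would fix $\epsilon$, choose $N=N(\epsilon)$ as the least integer with $\sqrt{2/(1-a)}\,a^{N/2}\le\epsilon/2$ (so that $N=2\ln(1/\epsilon)/\ln(1/a)+O(1)$), and split each $f$ in the unit ball as $P_Nf+(I-P_N)f$. The tail is then at most $\epsilon/2$ uniformly, so any $\epsilon/2$-net of $P_N(B_{\mathcal{H}_W})$ produces an $\epsilon$-net of $I_W(B_{\mathcal{H}_W})$, giving $\mathcal{C}(\epsilon,I_W)\le\mathcal{C}(\epsilon/2,P_N(B_{\mathcal{H}_W}))$. Now $P_N(B_{\mathcal{H}_W})$ is a convex body in the $2N$-dimensional space $\spn\{\cos,\sin\}$ of sup-norm radius at most the constant $\sqrt{2/(1-a)}$, so the elementary volumetric bound $\mathcal{C}(\delta,rB_d)\le(1+2r/\delta)^d$ gives $\ln\mathcal{C}(\epsilon/2,P_N(B_{\mathcal{H}_W}))\le 2N\ln(C/\epsilon)$. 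Substituting $N\sim 2\ln(1/\epsilon)/\ln(1/a)$ produces the constant $4/\ln(1/a)$.

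For the lower bound I would work inside the $2N$-dimensional subspace $W_N:=\spn\{\cos(b^n\pi\,\cdot\,),\sin(b^n\pi\,\cdot\,):0\le n<N\}$ and compare volumes. In the coordinates $(\alpha_n,\beta_n)$, the body $K:=I_W(B_{\mathcal{H}_W}\cap W_N)$ is the ellipsoid $\sum_{n<N}(\alpha_n^2+\beta_n^2)a^{-n}\le1$, whose Euclidean volume equals $\omega_{2N}\prod_{n<N}a^n=\omega_{2N}\,a^{N(N-1)/2}$. Since the trigonometric system is orthonormal on $I$ and $|I|=2$, one has $\|f\|_\infty\ge 2^{-1/2}\|f\|_2$ on $W_N$, so the sup-norm unit ball of $W_N$ is contained in $\sqrt{2}\,B^{2N}_{\ell^2}$ and has volume at most $2^N\omega_{2N}$. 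The standard volume-ratio estimate $\mathcal{C}(\epsilon,K)\ge\mathrm{vol}(K)/\mathrm{vol}(2\epsilon B)$ (the factor $2$ absorbing covering balls centred off $W_N$) then gives $\ln\mathcal{C}(\epsilon,I_W)\ge\ln\mathcal{C}(\epsilon,K)\ge-\tfrac{N^2}{2}\ln(1/a)+2N\ln(1/\epsilon)+O(N)$, and maximizing this quadratic in $N$ at $N\sim 2\ln(1/\epsilon)/\ln(1/a)$ yields the leading term $2[\ln(1/\epsilon)]^2/\ln(1/a)$.

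The main obstacle is the lower bound: making the volume comparison rigorous requires the two-sided control of $\|\cdot\|_\infty$ against $\|\cdot\|_2$ on $W_N$, a careful treatment of covering balls whose centres lie outside $W_N$, and the verification that all accumulated linear-in-$N$ error terms are genuinely $o([\ln(1/\epsilon)]^2)$, which holds because the optimal $N$ grows only like $\ln(1/\epsilon)$. I would also emphasize that the factor-of-two gap between the constants is intrinsic to the method rather than a defect of the estimates: the upper bound covers the $2N$-dimensional projection as though it were an isotropic ball, whereas the ellipsoid $K$ is strongly anisotropic, and it is exactly this over-counting that inflates the constant from $2$ to $4$.
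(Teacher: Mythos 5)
Your argument is correct, and the upper bound is essentially identical to the paper's: the same split of the unit ball into a $2N$-dimensional projection plus a tail of sup-norm $O(a^{N/2})$, the same choice $N\sim 2\ln(1/\epsilon)/\ln(1/a)$, and the same volumetric bound $(1+2r/\delta)^d$ for the finite-rank piece. (Your tail constant $\sqrt{2/(1-a)}$ is a slight overestimate of the exact value $\sqrt{a^N/(1-a)}$ computed in the paper's Lemma 3.4, but this is harmless since it only perturbs the $O(1)$ in $N(\epsilon)$.) For the lower bound the paper takes a slightly different route: it factors $I_W$ through $L^2(I)$ via $T_n=P_nL_nI_WJ_n$ and invokes K\"uhn's determinant estimate $\mathcal{C}(\epsilon,T)\ge|\det\sqrt{T^*T}|\,\epsilon^{-d}$, computing $\det(T_n^*T_n)=a^{n(n-1)}$ from the $L^2$-orthogonality of the system $\{\cos(b^n\pi\,\cdot\,),\sin(b^n\pi\,\cdot\,)\}$. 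Your direct volume comparison of the ellipsoid $I_W(B\cap W_N)$ against sup-norm balls, using $\|f\|_\infty\ge 2^{-1/2}\|f\|_2$ on $W_N$, is precisely the standard proof of that lemma unpacked: your volume $a^{N(N-1)/2}$ is the paper's $\sqrt{\det(T_n^*T_n)}$, and your factor-of-$2$ and $2^N$ losses correspond to the paper's $\|L_n\|=\sqrt2$ loss. What your version buys is self-containedness and an explicit view of where the anisotropy of the ellipsoid enters; what the paper's version buys is brevity, since the handling of covering centres outside the subspace is delegated to the cited lemma. Both correctly show that all linear-in-$N$ corrections are $o([\ln(1/\epsilon)]^2)$, so the constants $2/\ln(1/a)$ and $4/\ln(1/a)$ come out as claimed.
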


In particular, Theorem \ref{coveringesatimates} provides a weak equivalence for the covering numbers of a new example of non-trivial kernel, given by
\[\ln(\mathcal{C}(\epsilon, I_W ))\asymp \left[ \ln \left(1/\epsilon \right) \right]^2,  \quad \mbox{as}\quad \epsilon \rightarrow 0.\] 
The technique we employ to prove Theorem \ref{coveringesatimates} is based on the estimate of the operator norm of $I_{W}$ and some other related finite rank operators.\ This
approach follows the path employed in \cite{Azevedo}, where the authors investigated estimates for the covering numbers of the unit ball of the RKHS associated with isotropic kernels $W$ on compact two-point homogeneous spaces.\ On the other hand, in \cite{Kuhn} a similar technique was employed in order to obtain the asymptotic behavior of the covering numbers of the embedding of the RKHS associated to the Gaussian kernel on subsets of $\mathbb{R}^d$ in the space of continuous functions.\ There is a large literature on this subject and a constant interest on (optimal/sharp) estimates for entropy numbers, we refer \cite{dryanov,KT,Kuhn,Kuhn4,Kuhn5,Kuhn6,Kuhn7, pozharska1,pozharska2} and references therein.

The paper is organized as follows.\ In Section \ref{WKer}, we present some general facts about RKHSs and the characterization of $\mathcal{H}_{W}$. In particular, we present an orthonormal basis for $\mathcal{H}_{W}$. In Section 3, we obtain the upper and the lower bounds for the covering numbers and Theorem \ref{coveringesatimates} is proved.  

\section{The Weierstrass fractal kernel and its RKHS}
\label{WKer}\label{WRKHS}

In this section we present some necessary information and properties about the Weierstrass fractal kernel $W$ introduced in \eqref{WK}.\ We discuss some basic facts about positive definite kernels and their associated RKHSs. The main references for this section are \cite{aron,CZ}.

Let $X$ be a non-empty set.\ A symmetric function $K: X \times X \rightarrow \mathbb{R}$ is called a \emph{positive definite kernel} if it satisfies 
$$\sum_{i=1}^{n}\sum_{j=1}^{n} c_i c_j K(x_i,x_j ) \geq 0$$
for all $n\geq 2, \{ x_1 , x_2 ,...,x_n \}\subset X$ and $\{c_1 ,c_2 ,...,c_n \}\subset \mathbb{R}$.\ 

If $K: X \times X\to \mathbb{R}$ is a positive definite kernel, then there is a (unique)  RKHS $\mathcal{H}_K$ induced by $K$. This Hilbert space is given by the closure of the $\spn\{ K(y, \,\cdot \,) : y \in X\}$ with respect to an inner product $\langle{\,\cdot \,},{\, \cdot \,}\rangle_{K}$, satisfying the following properties 
\begin{itemize}
    \item[\textbf{R1.}] The function $x\mapsto K(x,\cdot)$ belongs to $\mathcal{ H}_K$, for all $x\in X$; 
    \item[\textbf{R2.}] (Reproduction property): $f(x)=\langle  f,K(x,\cdot) \rangle_{K}$, for all $f\in \mathcal{ H}_K$ and $x\in X$. 
\end{itemize}

It is straightforward to see
that the Weierstrass fractal kernel is positive definite.\ For $0<a<1$, the series in formula (\ref{WK}) converges uniformly on $\mathbb{R}\times\mathbb{R}$ and it implies that $W$ is a continuous kernel on $I$.\  Finally, $W$ is also nowhere differentiable on $I\times I$, since the partial derivatives of $W$ do not exist as a consequence of the nowhere differentiability of $w_{a,b}$ (\cite{JP,johsen}).\ These properties are given in the following result. 

\begin{lem}\label{LEMAPDW}
The function $W: I\times I\rightarrow \mathbb{R}$ is a continuous, positive definite and nowhere differentiable kernel.
\end{lem}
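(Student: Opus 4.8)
The plan is to verify the three asserted properties in turn, each following from the explicit Fourier series representation of $w_{a,b}$ together with Hardy's theorem cited in the introduction. Since symmetry of $W$ is immediate from the fact that $w_{a,b}$ is even, the substance lies in continuity, positive definiteness, and nowhere differentiability.

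For continuity, I would first note that the series $\sum_{n=0}^{\infty} a^n \cos(b^n \pi t)$ converges uniformly on $\mathbb{R}$ by the Weierstrass $M$-test, since $|a^n \cos(b^n \pi t)| \le a^n$ and $\sum_{n=0}^{\infty} a^n = (1-a)^{-1} < \infty$ for $0<a<1$. Each partial sum is continuous, so the uniform limit $w_{a,b}$ is continuous on $\mathbb{R}$; as $(x,y)\mapsto x-y$ is continuous, $W(x,y)=w_{a,b}(x-y)$ is continuous on $I\times I$.

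For positive definiteness, the key step is to expand each cosine via the subtraction formula $\cos(b^n\pi(x-y)) = \cos(b^n\pi x)\cos(b^n\pi y) + \sin(b^n\pi x)\sin(b^n\pi y)$, which recasts the kernel as a Mercer-type series with nonnegative coefficients $a^n$. Given points $x_1,\dots,x_m\in I$ and scalars $c_1,\dots,c_m\in\mathbb{R}$, interchanging the uniformly convergent outer sum with the finite double sum yields
\[
\sum_{i,j} c_i c_j\, W(x_i,x_j) = \sum_{n=0}^{\infty} a^n\left[\Big(\sum_{i} c_i \cos(b^n\pi x_i)\Big)^2 + \Big(\sum_{i} c_i \sin(b^n\pi x_i)\Big)^2\right]\ge 0,
\]
each summand being nonnegative. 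This establishes that $W$ is positive definite.

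For nowhere differentiability, I would reduce the bivariate statement to the univariate one. Fixing $y\in I$, the map $x\mapsto W(x,y)=w_{a,b}(x-y)$ is merely a translate of $w_{a,b}$. Hardy's theorem (\cite{hardy}) asserts that under the standing hypotheses $0<a<1$ and $ab\ge 1$ the function $w_{a,b}$ is nowhere differentiable, and a translate inherits this property, so $\partial_x W(x,y)$ fails to exist at every $(x,y)\in I\times I$; the same argument applied to the second variable handles $\partial_y W$. Consequently $W$ cannot be differentiable at any point of $I\times I$, since differentiability there would force the existence of those partial derivatives. The main obstacle is entirely absorbed into Hardy's theorem; the only care needed is the passage from the absence of the one-dimensional derivative to the absence of partial derivatives of the kernel, which follows directly from the translation structure of $W$.
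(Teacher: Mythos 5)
Your proposal is correct and follows essentially the same route as the paper, which disposes of the lemma in the paragraph preceding its statement: continuity via uniform convergence of the series, positive definiteness via the cosine subtraction formula (declared ``straightforward'' there and implicit in the orthonormal-basis characterization of Theorem \ref{teo1}), and nowhere differentiability by observing that the partial derivatives of $W$ cannot exist because each section $x \mapsto w_{a,b}(x-y)$ is a translate of the nowhere differentiable Weierstrass function. Your write-up simply supplies the details the paper leaves implicit.
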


We proceed by characterizing the RKHS $\mathcal{ H}_W$.\ We will write $\ell^2$ for the usual Hilbert space of the square summable sequences of real numbers, endowed with the standard inner product.\ The theorem asserts that the system 
\begin{equation} \label{baseRKHS}
\{a^{n/2}\cos(b^{n}\pi \, \cdot \,),\, a^{n/2}\sin(b^{n}\pi \, \cdot \,): n\in \mathbb{N}\}, 
\end{equation}
is an orthonormal basis of the space $\mathcal{H}_{W}$ (see \cite[Lemma 2.6]{SS}).

\begin{thm}\label{teo1} The RKHS $\mathcal{H}_W$ induced by the Weierstrass fractal kernel is given by
\begin{equation*}
\left\{ f: f(x) = \sum_{n=0}^{\infty} a^{n/2} \left[c_n \cos(b^n \pi x)+ d_n \sin(b^n \pi x)\right], \,\, x\in I \,\, \mbox{and} \,\, \{c_n\},\{d_n\}\in \ell^2 \right\}
\end{equation*}
endowed with the inner product
$$
\langle  f,g \rangle_{W} :=\sum_{n=0}^{\infty}\left(c_n e_n +d_n f_n\right), \quad f,g\in \mathcal{H}_W,
$$
where
$$f(\, \cdot\,)= \sum_{n=0}^{\infty} a^{n/2} \left[c_n \cos(b^n \pi \, \cdot\,)+ d_n \sin(b^n \pi \, \cdot\,)\right]$$ and 
$$g(\, \cdot\,)= \sum_{n=0}^{\infty} a^{n/2} \left[e_n \cos(b^n \pi \, \cdot\,)+ f_n \sin(b^n \pi \, \cdot\,)\right].
$$ 
\end{thm}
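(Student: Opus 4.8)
The plan is to construct a concrete Hilbert space $\mathcal{F}$ from the candidate system \eqref{baseRKHS}, verify directly that it is a reproducing kernel Hilbert space whose reproducing kernel is $W$, and then invoke uniqueness of the RKHS to conclude $\mathcal{F}=\mathcal{H}_W$. The natural starting point is a feature (Mercer-type) expansion of the kernel. Applying the identity $\cos(b^n\pi(x-y))=\cos(b^n\pi x)\cos(b^n\pi y)+\sin(b^n\pi x)\sin(b^n\pi y)$ to \eqref{WK} and splitting $a^n=a^{n/2}a^{n/2}$, I would write
\[
W(x,y)=\sum_{n=0}^{\infty}\bigl[a^{n/2}\cos(b^n\pi x)\bigr]\bigl[a^{n/2}\cos(b^n\pi y)\bigr]+\bigl[a^{n/2}\sin(b^n\pi x)\bigr]\bigl[a^{n/2}\sin(b^n\pi y)\bigr],
\]
which exhibits the functions in \eqref{baseRKHS} as a feature family for $W$ and motivates the claimed form of $\mathcal{H}_W$.

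Next I would take $\mathcal{F}$ to be the space described in the statement, equipped with the proposed inner product, and check that it is a genuine Hilbert space. The crucial point is that each $f\in\mathcal{F}$ determines its coefficient sequences $\{c_n\},\{d_n\}\in\ell^2$ uniquely, so that $\langle\cdot,\cdot\rangle_W$ is unambiguous. This is precisely the pointwise linear independence of \eqref{baseRKHS}: if the defining series vanishes uniformly on $I$, it also vanishes in $L^2(I)$, and pairing with the orthonormal trigonometric system recalled in the Introduction forces every coefficient to vanish. A Cauchy--Schwarz estimate $|f(x)|\le\bigl(\sum_n a^n\bigr)^{1/2}\bigl(\sum_n(|c_n|+|d_n|)^2\bigr)^{1/2}$ shows the series converges absolutely and uniformly, so $f\in C(I)$, and the coefficient map $(\{c_n\},\{d_n\})\mapsto f$ is an isometry from $\ell^2\oplus\ell^2$ onto $\mathcal{F}$. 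Completeness of $\mathcal{F}$ and the assertion that \eqref{baseRKHS} is an orthonormal basis then follow immediately from this isometry.

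It then remains to verify the two reproducing-kernel axioms for $\mathcal{F}$. For property \textbf{R1}, the feature expansion identifies the coefficient sequences of $W(x,\cdot)$ as $c_n=a^{n/2}\cos(b^n\pi x)$ and $d_n=a^{n/2}\sin(b^n\pi x)$, and the bound $\sum_n a^n\cos^2(b^n\pi x)\le(1-a)^{-1}<\infty$ confirms $W(x,\cdot)\in\mathcal{F}$. For property \textbf{R2}, pairing an arbitrary $f\in\mathcal{F}$ with $W(x,\cdot)$ and unwinding the definition of $\langle\cdot,\cdot\rangle_W$ collapses the sum to $\sum_n a^{n/2}\bigl[c_n\cos(b^n\pi x)+d_n\sin(b^n\pi x)\bigr]=f(x)$. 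Since $\mathcal{F}$ is a Hilbert space of functions on $I$ with bounded point evaluations (bounded by $\|W(x,\cdot)\|_W=W(x,x)^{1/2}$) whose reproducing kernel is $W$, uniqueness of the RKHS generated by $W$ gives $\mathcal{F}=\mathcal{H}_W$.

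The only genuinely delicate step I anticipate is the well-definedness of the inner product, that is, the uniqueness of the $\ell^2$-representation; the remaining verifications are routine once the feature expansion is in hand. This difficulty reduces to the linear independence of \eqref{baseRKHS}, which I would extract from the $L^2(I)$-orthonormality of $\{\cos(b^n\pi\cdot),\sin(b^n\pi\cdot)\}$ already recorded in the Introduction — the same fact one would otherwise cite from \cite[Lemma 2.6]{SS}.
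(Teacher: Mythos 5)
Your proposal is correct and follows essentially the same route as the paper: write the kernel in feature form via the cosine difference identity, check that the candidate space is a Hilbert space, verify properties \textbf{R1} and \textbf{R2}, and conclude by uniqueness of the RKHS. The only difference is that you spell out the well-definedness of the inner product (uniqueness of the $\ell^2$-coefficients via the $L^2(I)$-orthonormality of the trigonometric system), a point the paper subsumes under ``it is not difficult to see that $(\mathcal{H},\langle\cdot,\cdot\rangle_W)$ is a Hilbert space.''
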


\begin{proof}
Let $\mathcal{H}$ be the set of functions given in the theorem. It is not difficult to see that $(\mathcal{H}, \langle \cdot, \cdot \rangle_W)$ is a Hilbert space.\ Due the unicity of the RKHS induced by $W$ we just need to show that properties R1 and R2 above are satisfied. 

For $n\geq 0$ an integer, we write 
$$
c_n(x):={a^{n/2} \cos(b^n \pi x)}\quad \mbox{and} \quad d_n(x):={a^{n/2} \sin(b^n \pi x)},\quad x\in I.
$$
Consider $x\in I$. Since the sequences $\{c_n(x)\}$ and $\{d_n(x)\}$ belong to $\ell^2$, we have that
$$
W(x, \, \cdot \,) = \sum_{n=0}^\infty a^{n/2} \left[ c_n(x)  \cos(b^n \pi \,\, \cdot \,\,) + d_n(x) \sin(b^n \pi \,\, \cdot \,\,)\right]
$$
belongs to $\mathcal{H}$ and, in consequence, $\spn\{W(x,\, \cdot \,),\,x\in I\}$ is a dense subspace of $\mathcal{H}$ and property R1 is satisfied. 

In order to show R2, for every $f\in \mathcal{H}$, we write it in its series representation 
$$f(x) = \sum_{n=0}^{\infty} a^{n/2} \left[c_n \cos(b^n\pi x) +  d_n \sin(b^n\pi x)\right],\quad x\in I$$
and it is easy to see that
$$
\langle f, W(x, \cdot ) \rangle_{W} 
=f(x), \quad x\in I.
$$
Therefore, $\mathcal{H}=\mathcal{H}_W$ and the proof follows.
\end{proof}

It is important to note that Theorem \ref{teo1} gives us a Fourier-like series characterization for the elements of $\mathcal{H}_{W}$.\ As a consequence of \cite[p. 22]{johsen}, functions in $\mathcal{H}_{W}$ of the form
\[
f(\,\cdot\,)= \sum_{n=0}^{\infty} a^{n/2} \left[c_n\cos(b^n \pi \,\cdot\,)+ d_n \sin(b^n \pi \,\cdot\,)\right],
\]
are nowhere differentiable, if both $\lim_{n\rightarrow\infty}c_n (a^{1/2} b)^n$ and $\lim_{n\rightarrow\infty} d_n (a^{1/2} b)^n$ are non zero. In particular, it can be shown that the collection of functions in $\mathcal{H}_{W}$ satisfying these properties is a dense proper subset of $\mathcal{H}_{W}$.

\section{Estimates for the covering numbers}

We proceed by splitting this section in three subsections in order to prove the estimates for the covering numbers $\mathcal{C}(\epsilon, I_W )$.\ The upper bound is obtained in Subsection 3.1, and the lower bound in Subsection 3.2. Finally, in Subsection 3.3  we present the proof of Theorem \ref{coveringesatimates} as a direct consequence of the previous estimates and final comments.\ 

The following lemma presents important properties of the covering numbers that we will use in this section (for more details see \cite[Lemma 1]{Kuhn} and \cite[Section 2.d]{Konig}).\ 
 
 \begin{lem} \label{LemCN}
Consider $S, T: X\rightarrow Y$ and $R: Z \rightarrow X$ operators on real Banach spaces.\ For any $\epsilon, \delta > 0$ the following properties hold: 
 
\begin{itemize}
 \item[i)] $\mathcal {C} (\epsilon + \delta, T+S) \leq \mathcal{C} (\epsilon, T) \hspace{0.1cm} \mathcal{C}(\delta, S)$,

 \item[ii)] if $rank(T) <\infty$, then $\mathcal{C}(\epsilon, T)\leq \left( 1+2\| T\|/\epsilon \right)^ {rank (T)}$, 
 
 \item[iii)] if $\| T\| \leq \epsilon $, then $\mathcal{C}(\epsilon, T)=1$,
 
\item[iv)] if $\epsilon <\delta$, then $\mathcal{C}(\delta,T)<\mathcal{C}(\epsilon, T)$ and

\item[v)]  $\mathcal{C}(\epsilon \delta,TR)\leq \mathcal{C}(\epsilon, T)\hspace{0.1cm}\mathcal{C}(\delta, R)$.
\end{itemize}
 \end{lem}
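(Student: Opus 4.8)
The plan is to derive all five properties directly from the definition $\mathcal{C}(\epsilon, T) = \mathcal{C}(\epsilon, T(B_X))$, combined with the elementary homogeneity observation that if $A \subseteq \bigcup_i B(y_i, \epsilon)$ for closed balls $B(y,\epsilon)$ of radius $\epsilon$, then $\lambda A \subseteq \bigcup_i B(\lambda y_i, \lambda \epsilon)$ for every $\lambda > 0$. Two of the items are essentially immediate. For iii), the hypothesis $\|T\| \leq \epsilon$ forces $T(B_X) \subseteq B(0,\epsilon)$, a single ball, so $\mathcal{C}(\epsilon, T) = 1$. For iv), any cover of $T(B_X)$ by balls of radius $\epsilon$ is, whenever $\epsilon < \delta$, also a cover by balls of radius $\delta$ with the same centers; hence the minimal number of $\delta$-balls cannot exceed that of $\epsilon$-balls, giving $\mathcal{C}(\delta, T) \leq \mathcal{C}(\epsilon, T)$, which is the monotonicity that will be invoked later.

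For i), I would fix optimal covers $T(B_X) \subseteq \bigcup_{i=1}^{M} B(y_i, \epsilon)$ and $S(B_X) \subseteq \bigcup_{j=1}^{N} B(z_j, \delta)$ with $M = \mathcal{C}(\epsilon, T)$ and $N = \mathcal{C}(\delta, S)$. Given $x \in B_X$, I pick indices $i,j$ with $\|Tx - y_i\| \leq \epsilon$ and $\|Sx - z_j\| \leq \delta$, so that the triangle inequality yields $\|(T+S)x - (y_i + z_j)\| \leq \epsilon + \delta$; thus the $MN$ balls $B(y_i + z_j, \epsilon + \delta)$ cover $(T+S)(B_X)$. Property v) follows the same template but uses the scaling remark: cover $R(B_Z) \subseteq \bigcup_k B(x_k, \delta)$ and $T(B_X) \subseteq \bigcup_i B(y_i, \epsilon)$ optimally; for $z \in B_Z$ choose $k$ with $Rz - x_k \in \delta B_X$, whence $T(Rz - x_k) \in \delta\, T(B_X) \subseteq \bigcup_i B(\delta y_i, \delta \epsilon)$, and selecting the matching $i$ gives $\|TRz - (Tx_k + \delta y_i)\| \leq \delta \epsilon$; the $MN$ centers $Tx_k + \delta y_i$ then furnish a cover of $(TR)(B_Z)$ by balls of radius $\delta \epsilon$.

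The step carrying the only real content is ii), the finite-rank volumetric bound. Writing $r = \mathrm{rank}(T)$ and $V = \mathrm{range}(T)$, I note that $T(B_X) \subseteq \|T\|\, B_V$, where $B_V$ is the unit ball of the $r$-dimensional normed space $V$. I would then invoke the standard volume-counting argument: take a maximal $\epsilon$-separated subset $\{v_1, \ldots, v_N\}$ of $\|T\|\, B_V$; by maximality the balls $B(v_i, \epsilon)$ cover $\|T\|\, B_V \supseteq T(B_X)$, while the half-radius balls $B(v_i, \epsilon/2)$ are pairwise disjoint and all contained in $(\|T\| + \epsilon/2)\, B_V$. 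Comparing $r$-dimensional Lebesgue volumes, which scale as the $r$-th power of the radius, gives $N (\epsilon/2)^r \leq (\|T\| + \epsilon/2)^r$, and therefore $N \leq (1 + 2\|T\|/\epsilon)^r$. The main obstacle is simply to set up this volume comparison cleanly, namely identifying $V$ with $\mathbb{R}^r$ so that Lebesgue measure is available and verifying that a maximal separated set serves simultaneously as a cover and as a disjoint packing; once that is in place the claimed estimate is immediate.
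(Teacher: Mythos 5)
Your proof is correct, but note that the paper itself offers no proof of this lemma: it is quoted from the literature, with the reader referred to \cite[Lemma 1]{Kuhn} and \cite[Section 2.d]{Konig}. What you have written is a self-contained reconstruction of exactly the standard arguments behind those references: iii) and iv) straight from the definition, i) and v) by forming product covers with centers $y_i+z_j$ and $Tx_k+\delta y_i$ respectively (your use of the homogeneity of balls in v) is the right way to handle the composition), and ii) by the classical volume-comparison/packing argument in the $r$-dimensional range of $T$, after identifying it with $\mathbb{R}^r$ so that Lebesgue measure is available. This buys the paper nothing it does not already have by citation, but it does make the lemma verifiable without consulting the sources, which is a genuine service. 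One small but worthwhile observation: in item iv) the lemma as printed asserts the strict inequality $\mathcal{C}(\delta,T)<\mathcal{C}(\epsilon,T)$, which is false in general (take $T=0$, where both sides equal $1$); your argument correctly delivers only the non-strict monotonicity $\mathcal{C}(\delta,T)\leq\mathcal{C}(\epsilon,T)$, and that is all that is ever used later in the paper, so the ``$<$'' should be read as a typo for ``$\leq$''. The only places where your write-up could be tightened are routine: in ii) you should fix a convention (separation $>\epsilon$ gives disjoint open balls of radius $\epsilon/2$, or separation $\geq\epsilon$ with a trivial adjustment) and note that a maximal separated subset of a bounded set in a finite-dimensional space exists and is finite; neither point affects the conclusion.
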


\subsection{Upper bound}

From the previous lemma, we see that the operator norm can be used to achieve upper bounds for the covering numbers of the operator, then $\|I_W\|$ plays a crucial role in our approach.

\begin{prop}\label{NormEmb}
 The embedding $I_W : \mathcal{H}_W \longrightarrow C(I)$ satisfies 
 $$
  \| I_W \|^2  = \frac{1}{1-a}.
  $$
\end{prop}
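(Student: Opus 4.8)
The plan is to exploit the reproducing property together with the fact that the kernel has constant diagonal. The general principle I would invoke is that for any RKHS $\mathcal{H}_K$ with continuous kernel $K$ on a compact set, the norm of the embedding into $C(I)$ equals $\sup_{x\in I}\sqrt{K(x,x)}$. Both inequalities of this identity are elementary, so the real content reduces to evaluating the diagonal of $W$.

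First I would establish the upper bound. For arbitrary $f\in\mathcal{H}_W$ and $x\in I$, the reproduction property \textbf{R2} gives $f(x)=\langle f, W(x,\cdot)\rangle_W$, and the Cauchy--Schwarz inequality then yields $|f(x)|\le \|f\|_W\,\|W(x,\cdot)\|_W$. Since $\|W(x,\cdot)\|_W^2=\langle W(x,\cdot),W(x,\cdot)\rangle_W=W(x,x)$ again by \textbf{R2}, taking the supremum over $x\in I$ gives $\|f\|_\infty\le \|f\|_W\,\sup_{x\in I}\sqrt{W(x,x)}$, hence $\|I_W\|^2\le \sup_{x\in I}W(x,x)$.

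Next I would compute the diagonal explicitly. From \eqref{WK} and \eqref{1.1},
\[
W(x,x)=w_{a,b}(x-x)=w_{a,b}(0)=\sum_{n=0}^{\infty}a^n\cos(0)=\sum_{n=0}^{\infty}a^n=\frac{1}{1-a},
\]
using $0<a<1$. The decisive simplification here is that $W(x,x)$ is \emph{independent} of $x$, so the supremum above is attained at every point and equals $1/(1-a)$. This already gives $\|I_W\|^2\le 1/(1-a)$, and it is exactly this constancy of the diagonal that makes the estimate sharp rather than merely an upper bound.

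Finally, for the matching lower bound I would test the operator on a single kernel section. Fix any $x_0\in I$ and set $g:=W(x_0,\cdot)$, which lies in $\mathcal{H}_W$ by \textbf{R1}. Then $\|g\|_W^2=W(x_0,x_0)=1/(1-a)$, while $\|g\|_\infty\ge |g(x_0)|=W(x_0,x_0)=1/(1-a)$, so
\[
\|I_W\|\ge \frac{\|g\|_\infty}{\|g\|_W}\ge \frac{1/(1-a)}{\sqrt{1/(1-a)}}=\sqrt{\frac{1}{1-a}}.
\]
Combining the two bounds gives $\|I_W\|^2=1/(1-a)$. I do not anticipate a genuine obstacle in this argument; the only point deserving care is verifying that $g=W(x_0,\cdot)$ indeed belongs to $\mathcal{H}_W$ and that its RKHS norm is $\sqrt{W(x_0,x_0)}$, but both follow directly from the reproducing properties already recorded, so the proof should be short and self-contained.
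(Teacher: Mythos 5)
Your proof is correct and follows essentially the same route as the paper: Cauchy--Schwarz via the reproducing property for the upper bound, and a kernel section as test function for the lower bound (the paper's choice $c_n=a^{n/2}$ is precisely $W(0,\cdot)$, so your $g=W(x_0,\cdot)$ is the same witness up to the choice of base point). Your phrasing through the general identity $\|I_W\|=\sup_{x\in I}\sqrt{W(x,x)}$ is a clean, equivalent packaging of the same argument.
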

 
\begin{proof}
The reproducing property R2 of $W$ in $\mathcal{H}_W$ and an application of the Cauchy-Schwarz inequality leads us to
\begin{align*}
\| I_W  \|^2 \leq  \sup _{x\in I} \| W(x,\cdot)\|_W  ^2
   =  \frac{1}{1-a}.
\end{align*}
If we consider $f\in \mathcal{H}_W$, given by
$$
f(\,\cdot\,)= \sum_{n=0}^\infty a^{n/2} \left[c_n \cos(b^n \pi \,\cdot\,) + d_n \sin(b^n \pi \,\cdot\,)\right],
$$
then $g(x):= f(x)/\|f\| _W$, $x\in I$, is such that $g\in\mathcal{H}_W$ and $\|g\|_W=1$.\ For $f$ as above and $c_n =a^{n/2}$, $n\in\mathbb{N}$, we have that $g(0)=1/(1-a)$ and the proof follows. 
\end{proof}

Similarly to the proof above we can determine the operator norm of projections on orthogonal subspaces of $\mathcal{H}_W$.\ For $N>1$ a natural number, we write $P_{\mathcal{U}}^N$ and $P_{\mathcal{V}}^{N}$ for the projections onto 

\begin{align*}
\mathcal{U}:=\textrm{span}&\, \left\{ a^ {n/2} \cos (b^n \pi \,\,\cdot \,\,) , a^ {n/2} \sin(b^n \pi\,\,\cdot \,\,) : n= 0, \ldots ,N-1\right\}
\end{align*}
and $\mathcal{V}$, the closure of the $\mbox{span} \left\{ a^ {n/2} cos (b^n \pi\,\,\cdot \,\,) , a^ {n/2}  \sin(b^n \pi\,\,\cdot \,\,) :  n= N, \ldots \right\}$, 
respectively. 

\begin{lem}\label{projnorm} If $P_{\mathcal{U}}^N$ and $P_{\mathcal{V}}^{N}$ are the projections defined above, then 
\[\| I_W P_{\mathcal{U}}^N \|^2 = \frac{1- a^N}{1-a} \quad \mbox{and} \quad \| I_W P_{\mathcal{V}}^{N} \|^2 = \frac{a^N}{1-a}.\]
\end{lem}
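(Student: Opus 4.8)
The plan is to mirror the proof of Proposition \ref{NormEmb}, replacing the kernel section $W(x,\cdot)$ by its orthogonal projections. Since $P_{\mathcal{U}}^N$ and $P_{\mathcal{V}}^N$ are self-adjoint (orthogonal) projections on $\mathcal{H}_W$, the reproducing property R2 gives, for every $f\in\mathcal{H}_W$ and $x\in I$,
\[
(P_{\mathcal{U}}^N f)(x) = \langle P_{\mathcal{U}}^N f, W(x,\cdot)\rangle_W = \langle f, P_{\mathcal{U}}^N W(x,\cdot)\rangle_W,
\]
and likewise with $\mathcal{V}$ in place of $\mathcal{U}$. An application of the Cauchy--Schwarz inequality then reduces both operator norms to computing $\sup_{x\in I}\|P_{\mathcal{U}}^N W(x,\cdot)\|_W$ and $\sup_{x\in I}\|P_{\mathcal{V}}^N W(x,\cdot)\|_W$.

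First I would expand $W(x,\cdot)$ in the orthonormal basis \eqref{baseRKHS}. By R2, its Fourier coefficients against $a^{n/2}\cos(b^n\pi\cdot)$ and $a^{n/2}\sin(b^n\pi\cdot)$ are $a^{n/2}\cos(b^n\pi x)$ and $a^{n/2}\sin(b^n\pi x)$. Applying the projections merely truncates this expansion to the index ranges $0\le n\le N-1$ and $n\ge N$, respectively, so by Parseval together with $\cos^2+\sin^2=1$,
\[
\|P_{\mathcal{U}}^N W(x,\cdot)\|_W^2 = \sum_{n=0}^{N-1} a^n = \frac{1-a^N}{1-a} \quad\mbox{and}\quad \|P_{\mathcal{V}}^N W(x,\cdot)\|_W^2 = \sum_{n=N}^{\infty} a^n = \frac{a^N}{1-a},
\]
both independent of $x$. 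Combined with Cauchy--Schwarz this already yields the upper bounds $\|I_W P_{\mathcal{U}}^N\|^2 \le (1-a^N)/(1-a)$ and $\|I_W P_{\mathcal{V}}^N\|^2 \le a^N/(1-a)$.

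For the matching lower bounds I would exhibit explicit extremal functions and evaluate them at $x=0$, exactly as in Proposition \ref{NormEmb}. A clean choice is $f = P_{\mathcal{U}}^N W(0,\cdot)$, which has coefficients $c_n=a^{n/2}$ for $0\le n\le N-1$ and zero otherwise; self-adjointness and idempotency give $f(0) = \|P_{\mathcal{U}}^N W(0,\cdot)\|_W^2 = \|f\|_W^2 = (1-a^N)/(1-a)$, whence $\|f\|_\infty/\|f\|_W \ge f(0)/\|f\|_W = \sqrt{(1-a^N)/(1-a)}$. The analogous function $f = P_{\mathcal{V}}^N W(0,\cdot)$, with $c_n=a^{n/2}$ for $n\ge N$, satisfies $f(0)=\|f\|_W^2 = a^N/(1-a)$ and hence $\|f\|_\infty/\|f\|_W \ge \sqrt{a^N/(1-a)}$. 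Since $P_{\mathcal{U}}^N f = f$ and $P_{\mathcal{V}}^N f = f$ for these choices, the lower bounds meet the upper bounds and the two equalities follow.

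I do not anticipate a genuine obstacle: the only points requiring care are the self-adjointness step in the first display and checking that the test functions lie in $\mathcal{U}$ and $\mathcal{V}$ (for $\mathcal{V}$ the series converges immediately since $0<a<1$). The essential simplification is that $\|P_{\mathcal{U}}^N W(x,\cdot)\|_W$ and $\|P_{\mathcal{V}}^N W(x,\cdot)\|_W$ are \emph{constant} in $x$, so the supremum is attained everywhere and the upper and lower bounds coincide automatically.
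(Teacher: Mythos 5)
Your proof is correct and is precisely the argument the paper intends: the lemma is stated with only the remark ``similarly to the proof above,'' and your write-up is a careful execution of that same strategy (Cauchy--Schwarz against $P^N W(x,\cdot)$ for the upper bound, the extremal element $P^N W(0,\cdot)$ evaluated at $0$ for the lower bound). The observation that $\|P_{\mathcal{U}}^N W(x,\cdot)\|_W^2=\sum_{n=0}^{N-1}a^n$ is independent of $x$ via $\cos^2+\sin^2=1$ is exactly the right key computation.
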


From now on, we will employ the notation 
$$
\mu_N := \left( \frac{a^N}{1-a}\right)^{1/2}, \quad N \geq 1.
$$
The upper estimate for the covering numbers is given as follows.

\begin{thm}\label{tma4.4}
For any $\epsilon > 0$ sufficiently small, $$\ln (\mathcal{C} (\epsilon, I_W))\leq  4\frac{[\ln \left(1/\epsilon \right)]^2}{\ln(1/a)}.$$
\end{thm}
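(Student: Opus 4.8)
The plan is to exploit the orthogonal splitting $I_W = I_W P_{\mathcal U}^N + I_W P_{\mathcal V}^N$, valid because $P_{\mathcal U}^N + P_{\mathcal V}^N$ is the identity on $\mathcal H_W$, and to control the two summands by completely different mechanisms. By Lemma \ref{projnorm} the high--frequency part $I_W P_{\mathcal V}^N$ has norm exactly $\mu_N=(a^N/(1-a))^{1/2}$, which decays geometrically; the part $I_W P_{\mathcal U}^N$ has finite rank $2N$ (the cosines and sines with $n=0,\dots,N-1$) and, again by Lemma \ref{projnorm} together with Proposition \ref{NormEmb}, norm at most $\|I_W\|=(1-a)^{-1/2}$.

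First I would apply the subadditivity estimate (i) of Lemma \ref{LemCN} to the decomposition $\epsilon=(\epsilon-\mu_N)+\mu_N$, which gives
\[
\mathcal C(\epsilon,I_W)\le \mathcal C(\epsilon-\mu_N,\,I_W P_{\mathcal U}^N)\,\mathcal C(\mu_N,\,I_W P_{\mathcal V}^N).
\]
Because $\|I_W P_{\mathcal V}^N\|=\mu_N$, property (iii) makes the second factor equal to $1$, so the infinite--dimensional tail is eliminated at no cost. To the first factor I would apply the finite--rank bound (ii), obtaining
\[
\mathcal C(\epsilon-\mu_N,\,I_W P_{\mathcal U}^N)\le \Bigl(1+\frac{2\|I_W\|}{\epsilon-\mu_N}\Bigr)^{2N}.
\]

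The decisive choice is that of $N=N(\epsilon)$. I would take $N$ to be the smallest integer for which $\mu_N\le \epsilon/2$; solving $a^{N/2}(1-a)^{-1/2}\le \epsilon/2$ shows $N=\frac{2}{\ln(1/a)}\ln(1/\epsilon)+O(1)$, the additive constant depending only on $a$. With this choice $\epsilon-\mu_N\ge \epsilon/2$, and after taking logarithms the estimate becomes
\[
\ln\mathcal C(\epsilon,I_W)\le 2N\,\ln\!\Bigl(1+\frac{4\|I_W\|}{\epsilon}\Bigr).
\]
As $\epsilon\to0$ one has $2N\sim \frac{4}{\ln(1/a)}\ln(1/\epsilon)$ and $\ln(1+4\|I_W\|/\epsilon)\sim\ln(1/\epsilon)$, so the product is $\frac{4}{\ln(1/a)}[\ln(1/\epsilon)]^2$ to leading order. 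The constant $4=2\cdot2$ arises as the rank multiplier $2$ (from pairing each sine with its cosine) times the multiplier $2$ coming from the fact that $\mu_N$ decays like $a^{N/2}$ rather than $a^{N}$.

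I expect the main difficulty to lie in the bookkeeping of this final asymptotic step rather than in any single inequality. The integer rounding of $N$, the $O(1)$ term in $N(\epsilon)$, and the discrepancy between $\ln(1+4\|I_W\|/\epsilon)$ and $\ln(1/\epsilon)$ each contribute a positive remainder of order $\ln(1/\epsilon)$, so the honest estimate reads $\frac{4}{\ln(1/a)}[\ln(1/\epsilon)]^2+O(\ln(1/\epsilon))$. Since this remainder is of lower order than the quadratic leading term, it does not affect the coefficient $4/\ln(1/a)$ and becomes negligible for $\epsilon$ small; in particular it is exactly the form needed to feed into the $\limsup$ of Theorem \ref{coveringesatimates}. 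I would therefore organize the computation so that the coefficient of $[\ln(1/\epsilon)]^2$ is isolated as $4/\ln(1/a)$ and all lower--order contributions are swept into a single controlled remainder.
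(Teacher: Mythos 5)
Your proposal is correct and follows essentially the same route as the paper: split $I_W$ via the projections $P_{\mathcal U}^N$ and $P_{\mathcal V}^N$, kill the tail with Lemma \ref{LemCN}(iii) since $\|I_W P_{\mathcal V}^N\|=\mu_N\le\epsilon/2$, bound the rank-$2N$ part with Lemma \ref{LemCN}(ii), and choose $N(\epsilon)\approx 2\ln(1/\epsilon)/\ln(1/a)$; your split $\epsilon=(\epsilon-\mu_N)+\mu_N$ versus the paper's $\epsilon/2+\epsilon/2$ is immaterial. Your closing remark that the honest bound is $\frac{4}{\ln(1/a)}[\ln(1/\epsilon)]^2+O(\ln(1/\epsilon))$ is in fact slightly more careful than the paper, whose own proof also concludes only with an asymptotic equivalence $\approx$ rather than the literal inequality stated in the theorem.
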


\begin{proof}  First, note that  
\begin{equation*} \label{4.5}
    \frac{\ln (  (1-a) \mu_N^2 )}{\ln a} = N, \quad N\geq 1.
\end{equation*} 
Consider $\epsilon >0$ sufficiently small and let $N=N(\epsilon)$ be a natural number such that 
\begin{equation*}\label{MuNEpsilon}
    \mu_N \leq \epsilon/2< \mu_{N-1}.
\end{equation*}
It is not hard to see that 
\begin{equation}\label{Neps}
N(\epsilon) \approx \frac{\ln({\epsilon}^2 (1-a)/4)}{\ln a}\approx \frac{2\ln(1/\epsilon )}{\ln (1/a)}.
\end{equation}

From Lemma \ref{projnorm} we have $\| I_W P_{\mathcal{V}}^{N} \| = \mu_N$. By an application of Lemma \ref{LemCN}, item $iii)$, we obtain that
$$
\mathcal{C}(\mu_N ,I_W P_{\mathcal{V}}^{N} )=1.
$$
Since ${rank}(I_W P_{\mathcal{U}}^N )=2N$,  Lemma \ref{LemCN}, item $ii)$, implies that 
\begin{eqnarray*} 
\mathcal{C} (\epsilon/2, I_W P_{\mathcal{U}}^N ) 
& \leq & \left( 1+\frac{{4}}{ {\epsilon }} \left( \frac{{1-a}^N}  {1-a} \right)^{1/2}\right)^{2N}.
\end{eqnarray*}
Combining these estimates and Lemma \ref{LemCN}, item $i)$ we have 
\begin{eqnarray*} 
\mathcal{C}(\epsilon, I_W  ) \leq \mathcal{C} (\epsilon/2, I_W P_{\mathcal{U}}^N )\mathcal{C} (\epsilon/2, I_W P_{\mathcal{V}}^N ) \leq  \left[ 1+ \frac{4}{\epsilon} \left( \frac{1}{1-a} \right)^{1/2} \right]^ {2N}.
\end{eqnarray*}

Finally, by \eqref{Neps} we obtain the following estimate
\begin{align*}
    \ln(\mathcal{C} (\epsilon , I_W ))  & \leq 2N  \ln \left[1+ \frac{4}{\epsilon} \left( \frac{1}{1-a} \right)^{1/2} \right] \\
       & \approx 4\frac{[\ln \left(1/\epsilon \right)]^2}{\ln(1/a)},
    \end{align*}
and the proof is completed.
\end{proof}

\subsection{Lower bound}

In order to  present the lower bound for
$\mathcal{C} (\epsilon, I_K )$ we need the following lemma.

 \begin{lem}\label{CN1}\cite[Lemma 1]{Kuhn}
Let $X$ and $Y$ be d-dimensional real Hilbert spaces, and  $T: X  \rightarrow Y$ be an operator. For any  $\epsilon>0$, it holds
\begin{equation}
\mathcal{C}(\epsilon,T) \geq |\det \sqrt{T^{*}T}| \left(\frac{1}{\epsilon}\right) ^d.
\end{equation}
\end{lem}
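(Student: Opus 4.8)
The plan is to run a standard volume comparison argument inside the $d$-dimensional target space $Y$. The key observation is that the image $T(B_X)$ of the unit ball is an ellipsoid whose volume is governed exactly by the singular values of $T$, while any covering of a bounded set by balls of radius $\epsilon$ forces the combined volume of those balls to be at least the volume of the set. Matching these two facts yields the lower bound.

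First I would equip $X$ and $Y$ with the canonical Lebesgue measure $\mathrm{vol}$ determined by their inner products (the one normalising $B_X$, resp.\ $B_Y$, to have the volume of the Euclidean unit ball). Then I would invoke the polar decomposition $T = U\sqrt{T^{*}T}$, where $U$ is a linear isometry and $\sqrt{T^{*}T}$ is a nonnegative self-adjoint operator on $X$ whose eigenvalues $s_1,\dots,s_d \geq 0$ are precisely the singular values of $T$. Since $\sqrt{T^{*}T}(B_X)$ is the solid ellipsoid with semi-axes $s_1,\dots,s_d$, and $U$ preserves volume, the image $T(B_X)$ is a rotated copy of that ellipsoid. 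Hence
\[
\mathrm{vol}\big(T(B_X)\big) = s_1 s_2 \cdots s_d \,\mathrm{vol}(B_Y) = \big|\det \sqrt{T^{*}T}\big|\,\mathrm{vol}(B_Y),
\]
where the last equality uses $\det \sqrt{T^{*}T} = \prod_{i=1}^{d} s_i$.

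Next I would use the definition $\mathcal{C}(\epsilon,T) = \mathcal{C}(\epsilon,T(B_X))$. If $N := \mathcal{C}(\epsilon, T(B_X))$ balls of radius $\epsilon$ cover $T(B_X)$, then by finite subadditivity together with the translation invariance and scaling of Lebesgue measure,
\[
\mathrm{vol}\big(T(B_X)\big) \leq N\,\mathrm{vol}(\epsilon B_Y) = N\,\epsilon^{d}\,\mathrm{vol}(B_Y).
\]
Combining the two displays and cancelling the common positive factor $\mathrm{vol}(B_Y)$ gives $\big|\det \sqrt{T^{*}T}\big| \leq N\,\epsilon^{d}$, which is exactly the asserted inequality $\mathcal{C}(\epsilon,T) \geq \big|\det \sqrt{T^{*}T}\big|\,(1/\epsilon)^{d}$.

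The only delicate point — and the place I would be most careful — is the rigorous justification of the volume computation, especially in the degenerate case. If some $s_i = 0$ then $T$ is not injective, $T(B_X)$ is a lower-dimensional ellipsoid of zero volume, and $\det \sqrt{T^{*}T} = 0$, so the inequality holds trivially because $\mathcal{C}(\epsilon,T) \geq 1$. When all $s_i > 0$, the change-of-variables formula applied to the linear map $\sqrt{T^{*}T}$ supplies the Jacobian factor $\det \sqrt{T^{*}T}$, and the isometry $U$ preserves $\mathrm{vol}$; this is what makes the solid-ellipsoid volume identity precise. I would also remark that $\det \sqrt{T^{*}T}$ is intrinsic — independent of the orthonormal bases chosen — since the eigenvalues of the self-adjoint operator $\sqrt{T^{*}T}$ are, so both sides of the stated inequality are well defined.
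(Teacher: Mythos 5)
Your volume-comparison argument is correct, including the careful handling of the degenerate case where some singular value vanishes. The paper gives no proof of this lemma---it is quoted directly from K\"uhn's article---and the standard proof there is exactly this argument: compute $\mathrm{vol}(T(B_X)) = |\det\sqrt{T^{*}T}|\,\mathrm{vol}(B_Y)$ via the polar decomposition and compare with the total volume $N\epsilon^{d}\,\mathrm{vol}(B_Y)$ of any covering by $N$ balls of radius $\epsilon$.
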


A lower bound for $\mathcal{C} (\epsilon, I_W)$ is presented in the following theorem.

\begin{thm}\label{LB}
For any $\epsilon > 0$, the following inequality holds true
$$2\frac{[\ln \left(1/\epsilon \right)]^2}{\ln(1/a)} \leq \ln (\mathcal{C} (\epsilon, I_W)).
$$
\end{thm}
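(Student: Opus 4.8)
## Proof Plan for Theorem \ref{LB} (Lower Bound)

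The plan is to exploit the finite-rank restriction of $I_W$ to the subspace $\mathcal{U}$ together with Lemma \ref{CN1}, which gives a lower bound for covering numbers of operators between finite-dimensional Hilbert spaces in terms of a determinant. The essential observation is that covering numbers are monotone under restriction to subspaces: covering $I_W(B_{\mathcal{H}_W})$ requires at least as many balls as covering the smaller set $I_W P_{\mathcal{U}}^N(B_{\mathcal{H}_W})$, so $\mathcal{C}(\epsilon, I_W) \geq \mathcal{C}(\epsilon, I_W P_{\mathcal{U}}^N)$ for every $N$. The strategy is therefore to apply Lemma \ref{CN1} to the $2N$-dimensional operator $I_W P_{\mathcal{U}}^N$ (viewed as a map into the span of the relevant trigonometric functions inside $C(I)$, metrized by the ambient norm), obtain a determinant-based lower bound, and then optimize over the integer parameter $N$ as a function of $\epsilon$.

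First I would fix $N$ and analyze the singular values of $I_W P_{\mathcal{U}}^N$. Because $\{a^{n/2}\cos(b^n\pi\,\cdot\,),\, a^{n/2}\sin(b^n\pi\,\cdot\,)\}_{n=0}^{N-1}$ is an orthonormal basis of $\mathcal{U}$ in $\mathcal{H}_W$, the image under $I_W$ consists of these same trigonometric functions, now measured in the target norm on $C(I)$. To apply Lemma \ref{CN1} cleanly one works in a Hilbert target, so I would use the $L^2(I)$ inner product on the image and compute $\det\sqrt{T^*T}$, i.e. the product of the singular values $\sigma_n$ of the embedding restricted to each basis direction. Since the system $\{\cos(b^n\pi\,\cdot\,),\sin(b^n\pi\,\cdot\,)\}$ is orthogonal in $L^2(I)$ with $L^2$-norm squared equal to $1$ on $I=[-1,1]$, the image of each unit basis vector $a^{n/2}\cos(b^n\pi\,\cdot\,)$ has squared norm $a^n$ (and likewise for sine). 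Hence the singular values are $a^{n/2}$, each appearing twice, for $n=0,\ldots,N-1$, giving
\[
|\det \sqrt{T^*T}| = \prod_{n=0}^{N-1} a^{n} = a^{N(N-1)/2}.
\]
Lemma \ref{CN1} with $d=2N$ then yields $\mathcal{C}(\epsilon, I_W P_{\mathcal{U}}^N) \geq a^{N(N-1)/2}\,\epsilon^{-2N}$, and taking logarithms,
\[
\ln \mathcal{C}(\epsilon, I_W) \;\geq\; \tfrac{N(N-1)}{2}\ln a \;+\; 2N\ln(1/\epsilon)
\;=\; 2N\ln(1/\epsilon) - \tfrac{N(N-1)}{2}\ln(1/a).
\]

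The final step is the optimization over $N$. The right-hand side is a downward parabola in $N$, maximized near $N \approx 2\ln(1/\epsilon)/\ln(1/a)$, which is precisely the scaling of $N(\epsilon)$ appearing in \eqref{Neps}. Substituting this optimal $N$ and keeping the leading-order term as $\epsilon\to 0$, the $2N\ln(1/\epsilon)$ contribution gives $4[\ln(1/\epsilon)]^2/\ln(1/a)$ while the subtracted quadratic term contributes $-2[\ln(1/\epsilon)]^2/\ln(1/a)$, leaving the desired lower bound $2[\ln(1/\epsilon)]^2/\ln(1/a)$.

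The main obstacle I anticipate is the comparison between the $C(I)$ target norm required by the statement and the $L^2(I)$ Hilbert norm needed to invoke Lemma \ref{CN1}. One must justify that passing to the $L^2$ metric only \emph{decreases} the covering numbers (since $\|\cdot\|_2 \leq \sqrt{2}\,\|\cdot\|_\infty$ on $I$, an $\epsilon$-cover in $\|\cdot\|_\infty$ is an $\sqrt{2}\epsilon$-cover in $\|\cdot\|_2$), so that the $L^2$-based determinant bound transfers to a valid lower bound for the $C(I)$ covering numbers, at worst absorbing a harmless multiplicative constant into the sub-leading terms. The other routine care point is choosing $N=N(\epsilon)$ to be an integer near the optimum and verifying that rounding does not affect the leading $[\ln(1/\epsilon)]^2$ asymptotics; this is handled exactly as in \eqref{Neps}.
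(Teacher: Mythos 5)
Your proposal is correct and follows essentially the same route as the paper: restrict to the $2N$-dimensional span of the first $N$ cosine/sine pairs, pass from the sup norm to $L^2(I)$ at the cost of a factor $\sqrt{2}$, compute $\det\sqrt{T^*T}=a^{N(N-1)/2}$ from the orthogonality of the system in $L^2(I)$, apply Lemma \ref{CN1}, and optimize $N\approx 2\ln(1/\epsilon)/\ln(1/a)$. The only cosmetic difference is that you justify the restriction step by monotonicity of covering numbers under set inclusion, whereas the paper phrases it through the multiplicativity property of Lemma \ref{LemCN}(v) applied to the composition $P_nL_nI_WJ_n$; the two are interchangeable here.
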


\begin{proof}

Let $E:= \{  \psi _{2k},\psi _{2k+1}:\, k\in \mathbb{N}\cup \{0\}\}$ be the orthonormal basis of $\mathcal{H}_W$ given in (\ref{baseRKHS}) and denoted by $$\psi_{2k}(x)=a^{k/2}\cos(b^k \pi x)\quad\text{and}\quad \psi_{2k+1}(x)=a^{k/2}\sin(b^k \pi x), \quad x\in I.$$ 

For $n\in\mathbb{N}$, consider the operator defined by the following composition
$$
T_n : E_n \stackrel{ J_n }{\longrightarrow} \mathcal{H}_W  \stackrel{I_W}{\longrightarrow}C(I)\stackrel{L_n}{\longrightarrow} L^2 ( I) \stackrel{P_n }{\longrightarrow} F_n,
$$
where $J_n$ is the embedding of $E_n :=$ span $\{\psi_{k}, : k=0,..,2n-1 \}$
 into $ \mathcal{H}_W $, $L_n$ is the formal identity operator and
$P_n $ is the orthogonal projection of $L^2(I)$ onto $F_n := L_n I_W J_n(E_n)$.\ The operator $T_n :E_n \rightarrow F_n $ is uniquely determined by the following relation 
$$
T_n \psi_k (x) = \psi_k (x),\quad  k=0,1,\ldots,2n-1.
$$  
Hence
$$\langle{T_n ^* T_n \psi_{j}},{\psi_{\ell}}\rangle_{W}=\langle{ T_n \psi_{j}},{T_n \psi_{\ell}}\rangle_{2}=\langle{  \psi_{j}},{\psi_{\ell}}\rangle_{2},\quad  0\leq  j,\ell\leq 2n-1.$$
Since  
$$\|\psi_{2k}\|_2^2=\|\psi_{2k+1}\|_2^2 =a^k,\quad k\geq 0,$$
it follows that
$$\langle{T_n ^* T_n \psi_{j}},{\psi_{\ell}}\rangle_{W}=
\delta_{j\ell}
\|\psi_{j}\|_2^2 =a^k,$$
for $j=2k$ and $j=2k+1$. That is, the representing matrix  of $T_n ^* T_n$ is the following diagonal matrix
$$\begin{bmatrix}
1 & 0 & 0 & 0 &\cdots & 0 & 0\\
0 & 1 & 0 & 0 &\cdots& 0 & 0\\
0 & 0 & a & 0 &\cdots& 0 & 0\\
0 & 0 & 0 & a &\cdots& 0 & 0\\
& & & & \cdots & & & \\
0 & 0 & 0 & 0 &\cdots&a^{n-1}& 0\\
0 & 0 & 0 & 0 &\cdots& 0 & a^{n-1}
\end{bmatrix},$$
and hence
$$\det(T_n ^* T_n)=a^{n(n-1)}.$$
From the norm estimates $\|L_n\|=\sqrt{2}$ and $\|P_n\|=\|J_n\|=1$,
by  an application of Lemma \ref{LemCN}-items (iii) and (v),  we have that
\begin{eqnarray*}
\mathcal{C} ( \sqrt{2}\epsilon, T_n )=\mathcal{C} (\sqrt{2}\epsilon, P_n L_n  I_W J_n ) & \leq & \mathcal{C} (1, P_n )\mathcal{C} (\sqrt{2}, L_n )\mathcal{C} ({\epsilon}, I_W )\mathcal{C} (1, J_ n )\\ & = & \mathcal{C} (\epsilon, I_W ).
\end{eqnarray*}
Now, Lemma \ref{CN1} implies that
$$
\mathcal{C} (\epsilon, I_W ) \geq \mathcal{C} (\sqrt{2}\epsilon, T_n  )\geq \sqrt{ \det (T_n ^* T_n )}\left(\frac{1}{\sqrt{2}\epsilon}\right)^{2n},
$$
then 
$$\mathcal{C} (\epsilon, I_W ) \geq \frac{a^{n^2 /2}}{(2 \epsilon^2 )^{n}}. 
$$

For $\epsilon>0$ sufficiently small, we can choose $n=\lceil \beta \rceil$, where
$$
\beta:= 2\frac{\ln(\sqrt{2}\epsilon)}{\ln(a)},
$$
and $\lceil \, \cdot\, \rceil$ is the ceiling function. And we obtain that 
 \begin{align*}
 \ln (\mathcal{C}(\epsilon, I_W ) &\geq 
  \frac{n^2}{2}\ln(a)-2n\ln(\sqrt{2}\epsilon)
  \approx 2\frac{[\ln \left(1/\epsilon \right)]^2}{\ln(1/a)}. 
  \end{align*} 
\end{proof}

\subsection{Proof of Theorem \ref{coveringesatimates} and final remarks}
A direct application of Theorem \ref{tma4.4} and Theorem \ref{LB} leads us to


\[ \frac{2}{{\ln\left(1/a\right)}}  \leq \liminf_{\epsilon\to 0}\frac{\ln(\mathcal{C}(\epsilon, I_W ))}{\phi(\epsilon)} \leq \limsup_{\epsilon\to 0}\frac{\ln(\mathcal{C}(\epsilon, I_W ))}{\phi(\epsilon)} \leq \frac{4}{{\ln\left(1/a\right)}}  ,\]
 where
 \[\phi(\epsilon):= [\ln \left(1/\epsilon \right)]^2 .\]

Unfortunately, we were not able to respond to the natural question 
of whether the limit $$\lim_{\epsilon\to 0}\frac{\ln(\mathcal{C}(\epsilon, I_W ))}{\phi(\epsilon)}$$
exists. As pointed out by Lorentz in \cite{Lorentz2}, the exact determination of the entropy is, in most cases, very difficult. See for instance \cite{Kuhn}, where it is left as an open question.

\section*{Acknowledgments} The authors thank the anonymous referee for carefully reading our manuscript and for several detailed corrections, suggestions, and comments offered. Especially, the authors are grateful for the kind contribution of the referee leading us to a corrected version of the main result and, consequently, to an enhanced and publishable version of the paper.


\begin{thebibliography}{}

\bibitem{aron} N. Aronszajn, Theory of Reproducing Kernels, Trans. Amer. Math. Soc. 68 (1950) 337-404. https://doi.org/10.1090/S0002-9947-1950-0051437-7.

\bibitem{Azevedo} D. Azevedo, V.S. Barbosa, Covering numbers of isotropic reproducing kernels on compact two-point homogeneous spaces, Math.Nachr. 290 (2017) 2444-2458. https://doi.org/10.1002/mana.201600125.



\bibitem{BL} M.V. Berry, Z.V. Lewis, On the Weierstrass-Mandelbrot Fractal Function, Proceedings of the Royal Society A, Mathematical, Physical and Engineering Sciences. 370 (1980) 459-484. https://doi.org/10.1098/rspa.1980.0044. 

\bibitem{SC} F. Cucker, S. Smale, On the mathematical foundations of learning, Bull. Amer. Math. Soc. 39 (2001) 1-49.

\bibitem{CZ} F. Cucker, D.X. Zhou, Learning Theory: An approximation theory viewpoint, Cambridge University Press, Cambridge, 2007.

\bibitem{dryanov} D. Dryanov, Kolmogorov entropy for classes of convex functions,
Constr. Approx. 30 (2009) 137–153. https://doi.org/10.1007/s00365-009-9053-3.

\bibitem{Edmunds} D.E. Edmunds, H. Triebel,  Function Spaces, Entropy Numbers, Differential Operators, Cambridge University Press, Cambridge, 1996.

\bibitem{hardy} G.H. Hardy, Weierstrass`s non-differentiable function,Trans. Amer. Math. Soc. 17 (1916) 301-325.

\bibitem{haussler} D. Haussler, Decision theoretic generalizations of the PAC model for neural net and other learning applications, Inform. and Comput. 100 (1992) 78–150. https://doi.org/10.1016/0890-5401(92)90010-D.

\bibitem{JP} M. Jarnicki, P. Pflug, Continuous Nowhere Differentiable Functions: The Monsters of Analysis, Springer Monographs in Mathematics, Switzerland, 2015.

\bibitem{johsen} J. Johnsen, Simple Proofs of Nowhere-Differentiability for Weierstrass Function and Cases of Slow Growth, J Fourier Anal Appl. 16 (2010) 17-33. https://doi.org/10.1007/s00041-009-9072-2.



\bibitem{KT}  A.N. Kolmogorov, V.M. Tihomirov, $\epsilon$-entropy and $\epsilon$-capacity of sets in function spaces, Uspekhi Mat. Nauk. 14 (2 (86))
(1959) 3-86 (in Russian); English translation: Amer. Math. Soc. Transl. (2) 17 (1961) 277-364.

\bibitem{Konig} H. K\"{o}nig, Eigenvalue Distribution of Compact Operators, Birkh\"{a}user Verlag, Basel, 1986.

\bibitem{Kuhn} T. Kühn, Covering numbers of Gaussian reproducing kernel Hilbert space, Journal of Complexity. 27 (2011) 489-499. http://dx.doi.org/10.1016/j.jco.2011.01.005

\bibitem{Kuhn4} T. Kühn, Entropy numbers in weighted function spaces. The case of intermediate weights, Proc. Steklov Inst. Math. 255b (2006) 159–168.

\bibitem{Kuhn5} T. Kühn, Entropy numbers in sequence spaces with an application to weighted function spaces, J. Approx. Theory. 153 (2008) 40–52.

\bibitem{Kuhn6} T. Kühn, H.-G. Leopold, W. Sickel, L. Skrzypczak, Entropy numbers of embeddings of weighted Besov spaces, Constr. Approx. 23 (2006) 61–77.

\bibitem{Kuhn7} T. Kühn, H.-G. Leopold, W. Sickel,L. Skrzypczak, Entropy numbers of embeddings of weighted Besov spaces. II, Proc. Edinb. Math. Soc. 49(2) (2006) 331–359. https://doi.org/10.1017/S0013091505000386.


\bibitem{Li} W. V. Li, W. Linde, Approximation, metric entropy and small ball estimates for Gaussian measures, Ann. Probab. 27 (1999) 1556-1578. https://doi.org/10.1214/aop/1022677459.

\bibitem{Lorentz1}  G.G. Lorentz, Metric entropy and approximation, Bull. Amer. Math. Soc. 72 (1966) 903–937. https://doi.org/10.1090/S0002-9904-1966-11586-0.

\bibitem{Lorentz2} G.G. Lorentz, Metric entropy, widths, and superpositions of functions, Amer. Math. Monthly. 69 (1962) 469–485. https://doi.org/10.1080/00029890.1962.11989915


\bibitem{PT} V. Pipiras, M.S. Taqqu, Convergence of the Weierstrass-Mandelbrot process to fractional brownian motion. Fractals. 8 (2000) 369-384. https://doi.org/10.1142/S0218348X00000408.

\bibitem{pozharska1} K.V. Pozharska, Entropy numbers of Nikolskii-Besov type classes of periodic functions of several variables, (Ukrainian) Ukr. Mat. Visn. 15(3) (2018) 383–398, 443; translation in J. Math. Sci. (N.Y.) 241(1) (2019) 64–76.
 
\bibitem{pozharska2} K.V. Pozharska, Estimates for the entropy numbers of the classes of periodic functions of several variables in the uniform metric, (Ukrainian) Ukrain. Mat. Zh. 70(9) (2018) 1249–1263; translation in Ukrainian Math. J. 70(9) (2019) 1439–1455. 



\bibitem{SS} I. Steinwart, C. Scovel, Mercer's Theorem on General Domains: On the Interaction between Measures, Kernels, and RKHSs, Constr. Approx. 35 (2012) 363–417. https://doi.org/10.1007/s00365-012-9153-3.

\bibitem{zhou1} D.X. Zhou, Capacity of reproducing kernel space in learning theory, IEEE Transactions on information theory. 49(7) (2003) 1743-1752. http://dx.doi.org/10.1109/TIT.2003.813564.

\end{thebibliography}
\end{document}